\newtheorem{thm}{Theorem}[section]
\newtheorem{cor}[thm]{Corollary}
\theoremstyle{definition}
\newtheorem{defn}[thm]{Definition}
\newtheorem{rem}[thm]{Remark}
\newtheorem{exa}[thm]{Example}
\numberwithin{equation}{section}
\begin{document}

\baselineskip=12.1pt


\title{On the automorphism groups of $us$-Cayley  graphs }

\author{S.Morteza Mirafzal \\ Department of Mathematics \\
  Lorestan University, Khorramabad,  Iran\\   
E-mail: smortezamirafzal@yahoo.com,   mirafzal.m@lu.ac.ir}

\date{}

\maketitle


\renewcommand{\thefootnote}{}

\footnote{2010 \emph{Mathematics Subject Classification}: 05C25, 94C15, 20B25}

\footnote{\emph{Keywords}: Cayley graph,  vertex-transitive graph,   automorphism
group, M\"{o}bius ladder, $k$-ary   $n$-cube.}

\footnote{\emph{Date}:  }

\renewcommand{\thefootnote}{\arabic{footnote}}
\setcounter{footnote}{0}
\date{}
\begin{abstract}
Let $G$ be a  finite abelian group written additively with identity $0$, and
  $\Omega$ be an inverse closed generating  subset of   $G$ such that $0\notin \Omega$.  We say that $ \Omega $ has the   property  \lq\lq{}$us$\rq\rq{} (unique summation),  whenever   for every $0 \neq g\in G$ if  there are $s_1,s_2,s_3,
s_4 \in \Omega  $ such that  $s_1+s_2=g=s_3+s_4 $, then we have $\{s_1,s_2 \} = \{s_3,s_4 \}$. We say that a Cayley graph $\Gamma=Cay(G;\Omega)$ is a $us$-$Cayley\  graph$, whenever $G$ is an abelian group and the generating subset $\Omega$ has the property \lq\lq{}$us$\rq\rq{}.
In this paper, we show that if $\Gamma=Cay(G;\Omega)$ is a $us$-$Cayley\  graph$, then $Aut(\Gamma)=L(G)\rtimes A$, where $L(G)$ is the left regular representation of $G$  and $A$ is the group of all  automorphism groups $\theta$ of the group $G$ such that  $\theta(\Omega)=\Omega$. Then, as some applications,  we explicitly determine the automorphism groups of some classes of graphs  including  M\"{o}bius ladders and $k$-ary $n$-cubes.

\end{abstract}

\maketitle 
\section{ Introduction}
\noindent
  Let $G$ be a finite group with identity $1$  and
suppose that $\Omega$ is an inverse closed subset of   $G$ such that $1 \notin \Omega$. When the group $G$ is an abelian group, then we say that the Cayley graph $\Gamma=Cay(G;\Omega)$ is an abelian Cayley graph.
\begin{defn} Let $G$ be  a finite abelian group written additively   and
 $\Omega$ be an inverse closed subset of   $G$ such that $0 \notin \Omega$.  We say that $ \Omega $ has the  property   \lq\lq{}$us$\rq\rq{} (unique summation),   whenever:\\
  for every $0 \neq g \in G$ if there are $s_1,s_2,s_3,
s_4$  $ \in \Omega, $ such that  $s_1+s_2=g=s_3+s_4$, then we have $\{s_1,s_2 \} = \{s_3,s_4 \}$. We say that a Cayley graph $\Gamma=Cay(G;\Omega)$ is a $us$-$Cayley\  graph$, whenever $G$ is an abelian group and $\Omega$ has the property \lq\lq{}$us$\rq\rq{}.
\end{defn}
  There are   important   families of $us$-Cayley  graphs in graph theory and  its applications.

\begin{exa}Let $4\neq n \geq 3$ be an  integer. Consider the cycle $C_n$. We know that $C_n = Cay (\mathbb{Z}_n;\Omega )$, where $\mathbb{Z}_n$  is the cyclic group of order $n$ and $\Omega=\{1,-1  \}$. According to Definition 1.1. it is easy to see that $C_n$ is a $us$-Cayley  graph. Note that $C_4$ is not a $us$-Cayley  graph  because in $\mathbb{Z}_4$ we have $1+1=2=-1-1$,  but $\{1,1\} \neq \{-1,-1\}$.

 \begin{exa} Let $n\geq 1$ be an  integer.  The hypercube  $Q_n$ is the graph whose vertex set is $ \{0,1  \}^n $, where two $n$-tuples  are adjacent if  they differ in precisely one coordinates. It is   easy   to see that $Q_n   \cong Cay(\mathbb{Z}_{2}^n; \Omega )$, where   $\Omega=\{ e_i \  | \  1\leq i \leq n \}, $    $e_i = (0, ..., 0, 1, 0, ..., 0)$  with 1 at the $i$th position. It is  easy   to show that the hypercube $Q_n$ is a $us$-Cayley  graph.
\end{exa}

\end{exa} 

\begin{exa} Let $c,d,m$ be integers such that $1 \leq c <d$, $d \geq 5$ and $m\geq 1$. Let $n=cd^m$ and $\Omega=\{ \pm 1,...,\pm d^{m-1} \}$. Let $\Gamma=Cay (\mathbb{Z}_n;\Omega )$. It is easy to check that $\Gamma$ is a $us$-Cayley graph.

\end{exa}

 \begin{exa} Let $n\geq 4$. The  M\"{o}bius   ladder   on $n$ vertices, which we  denote it by $M_n$, is constructed by connecting vertices $u$ and
$v$ in the cycle $C_n$ if $d(u; v) = diam(C_n)$. Vertices that satisfy this condition are called
antipodal vertices. The
two cases ($n$ even or odd) change the structure of these graphs quite radically. If $n$ is an even
integer,  then the M\"{o}bius ladder
$M_n$ is called an even  M\"{o}bius ladder. Similarly,
If $n$ is an odd integer,  then the M\"{o}bius
 ladder    $M_n$ is   an odd M\"{o}bius ladder.
The even
M\"{o}bius ladder $M_{2k}$ is  a
 3-regular graph with diameter $k$,  and the odd  M\"{o}bius ladder $M_{2k+1}$ is a
 4-regular graph with diameter $k$. It is easy to see that $M_{2k} \cong Cay(\mathbb{Z}_{2k};S)$, where $S=\{1,2k-1,k  \}$, and $M_{2k+1} \cong Cay(\mathbb{Z}_{2k+1};T)$, where $T=\{1,2k,k,k+1\}.$
  When $n=6=2\times 3$, then in $\mathbb{Z}_6$ we have $3+5=1+1$.
Hence,     From Definition 1.1  it follows that if  $n=6$, then the even M\"{o}bius ladder
  $M_n$ is not a $us$-Cayley graph.
  It can be easily shown that if $n=2k, \ k>3$, then  the M\"{o}bius ladder
  $M_n$ is a $us$-Cayley graph.
   On the other hand,
 When $n=5=2\times 2+1$, then in $\mathbb{Z}_5$ we have $4+4=1+2$. Also, when $n=7=2\times 3 + 1$, then in $\mathbb{Z}_7$  we  have $6+3=1+1$. Therefore,     from the definition of   $us$-Cayley graphs  it follows that when $n\in \{ 5,7 \}$, then the odd M\"{o}bius ladder
  $M_n$ is not a $us$-Cayley graph. On the other hand,
  it can easily be  shown that if $n=2k+1, \ k>3$,  then  the M\"{o}bius ladder
  $M_n$ is a $us$-Cayley graph.

\end{exa}
 We have not found any document containing a proof for determining the automorphism group of the  odd M\"{o}bius ladder  $M_{2k+1}$.

\begin{exa}
Let $k$ and $n$  be integers with $k>1$ and $n>0$.
The $k$-ary   $n$-cube  $ Q_n ^k$
  is defined as a graph with vertex set $ \mathbb{Z}_k^n $, in which two vertices $v =(x_1,x_2,...,x_n)$ and $w =(y_1,y_2,...,y_n)$
are adjacent if and only if there exists an integer $j$,  
$1 \leq j \leq n$, such that $x_j = y_j \pm 1$ (mod k) and $x_i = y_i$ for
all $i \in  \{1, 2,..., n\}-\{j\}$. It is easy to see that  that
$ Q_n ^k = Cay(\mathbb{Z}_{k}^n; \Omega )$, where $\mathbb{Z}_{k}$ is
the cyclic group of order $k$, and $\Omega=\{ \pm e_i \  | \  1\leq i \leq n \}, $ where  $e_i = (0, ..., 0, 1, 0, ..., 0)$,  with 1    at the $i$-th position. It is an easy task to show that the $k$-ary   $n$-cube  $ Q_n ^k$  is a $us$-Cayley  graph whenever $k \neq 4$.

\end{exa}

To the best of our knowledge, the automorphism groups of the graphs in Example 1.6. have not been determined.\

In this paper, we determine the automorphism group of $us$-Cayley  graphs. In fact we show that
 if $\Gamma=Cay(G;\Omega)$ is a $us$-$Cayley\  graph$, then $Aut(\Gamma)=L(G)\rtimes A$, where $L(G)=\{l_v | v \in  G, \   l_v(x)=v+x$ for every $ x \in G  \}$  and $A$ is the group of all group automorphisms $\theta $ of the group $G$ such that  $\theta(\Omega)=\Omega$. This result generalizes known facts about automorphism groups of some families of graphs to some other classes of graphs.\

Let $\Gamma=(V,E)$ be a graph. There is an intimate  relationship between some topological properties of the graph $\Gamma$ and its automorphism group $Aut(\Gamma)$. For instance if $Aut(\Gamma)$ acts transitively on the sets $V$ and $E$, then the vertex-connectivity of $\Gamma$, which is an important parameter in applied graph theory, is best possible [16]. 
\section{Preliminaries}
In this paper, a graph $\Gamma=(V,E)$ is
considered as a finite undirected simple graph where $V=V(\Gamma)$ is the vertex-set
and $E=E(\Gamma)$ is the edge-set. For all the terminology and notation
not defined here, we follow $[1,3,5]$.\

The graph $\Gamma$ is called $vertex$-$transitive$  if  $Aut(\Gamma)$
acts transitively on $V(\Gamma)$.  For $v\in V(\Gamma)$ and $G=Aut(\Gamma)$ the stabilizer subgroup
$G_v$ is the subgroup of $G$ consisting of all automorphisms that
fix $v$. We say that $\Gamma$ is $symmetric$ (or $arc$-$transitive$) if for all vertices $u, v, x, y$ of $\Gamma$ such that $u$ and $v$ are adjacent, also, $x$ and $y$ are adjacent, there is an automorphism $\pi$ in $Aut(\Gamma)$ such that $\pi(u)=x$ and $\pi(v)=y$. It is clear that a symmetric graph is vertex   and edge transitive. It is a known fact that a vertex-transitive graph $\Gamma$ is symmetric if and only if for a vertex $v \in V(\Gamma)$ the subgroup $G_v$ acts transitively on the set $N(v)$ [1.  Chapter 15,  4.  Chapter 4].

Let $G$ be any abstract finite group with identity $1$, and
suppose that $\Omega$ is an inverse closed subset of   $G$, namely, a subset of $G$  with the following properties:

(i) $x\in \Omega \Longrightarrow x^{-1} \in \Omega$,   $ \ (ii)
 \ 1\notin \Omega $.

The $Cayley\  graph$  $\Gamma=\Gamma (G; \Omega )$ is the (simple)
graph whose vertex-set and edge-set defined as follows: $V(\Gamma) = G $,  $  E(\Gamma)=\{\{g,h\}\mid g^{-1}h\in \Omega \}$.\

A group $G$ is called a semidirect product of $ N $ by $Q$,
denoted by $ G = N \rtimes Q $,
 if $G$ contains subgroups $ N $ and $ Q $ such that, (i)\ $
N \unlhd G $ ($N$ is a normal subgroup of $G$ ); (ii) $ NQ = G $; and
(iii) $N \cap Q =\{1\} $. \ \

Let $H$ be a group. The $left\  regular\  representation $   of $H$ denoted by $L(H)$ 
is the permutation group $\{l_h |\  h\in H\}$ in $Sym(H)$, where $l_h$ is defined to be the
map $l_h(x): H \rightarrow H$,  $l_h(x)=hx$ for every $ x\in H$. Recall that Cayley's theorem asserts that every
group $H$ is isomorphic to the permutation group $L(H)$. Let $\Gamma=Cay(H;\Omega)$ be a Cayley graph. If $x,y \in H$ and $x \leftrightarrow y$, that is,  $x$ adjacent to $y$ in $\Gamma$,  then $x^{-1}y \in \Omega$. Thus for every $h \in H$ we have  ${(hx)}^{-1}(hy) \in \Omega$. Hence, $l_h$ is an automorphism of the graph $\Gamma$.
Therefore, the left regular representation $L(H)$ is a subgroup of the group $G$=$Aut(\Gamma)$.
Given a group $H$ and a subset $S \subseteq H$, let $Aut(H, S)$ be the subgroup  of
automorphisms of the group $H$ that fix the set  $S$ setwise. In other words,
$Aut(H, S)= \{g \in Aut(H) |\  g(S)  = S\}$. It is easy to see that
if $\Gamma=Cay(H;S),$ then $Aut(H, S)$ is a subgroup of the  stabilizer group $G_1$, where $G= Aut(\Gamma)$ and $1$ is the identity element of the group $H$. Let $K$ be a group and $T$ is a nonempty subset of $K$. The normalizer $N_K(T)$ of $T$ in $K$ is the subgroup of the elements $x$ in $K$ such that $x^{-1}tx \in T,  $ for evert $t \in T.$ It is easy to see that in the group $G=Aut(\Gamma)$, the group $Aut(H, S)$ is a subgroup of normalizer $L(H).$ In other words, if $h\in   H$ and  $f \in Aut(H, S)$ then $f^{-1}l_hf  \in L(H). $  Note that for each $x \in H$ we have $(f^{-1}l_hf)(x)=f^{-1}(hf(x))=f^{-1}(h)x$, and hence $f^{-1}l_hf=l_{f^{-1}(h)}.$ Also note that in the group $G$ we have $L(H) \cap Aut(H, S)=\{ 1\}.$  Therefore    $\langle L(H),Aut(H, S) \rangle$, the generated subgroup of $L(H)$ and $Aut(H, S)$ in $G$,  is the subgroup $L(H) \rtimes Aut(H, S)$. Hence,  we have $L(H) \rtimes Aut(H, S) \leq G=Aut(\Gamma). $ \

\section{Main results}

There are various important families of graphs $\Gamma$,  in which we know that for a particular group $M$ we have
$M \leq Aut(\Gamma)$, but  showing  that in fact    $M = Aut(\Gamma)$  is not an easy task. 
The interested reader can find various examples about this matter in graph theory (see [7,8,9,10,11,12,13,14]). \\
In the sequel, we wish to show that if $\Gamma=Cay(H;S)$ is a $us$-Cayley graph, then $L(H) \rtimes Aut(H, S) =  Aut(\Gamma).$

\begin{thm} Let $H$ be an abelian group written additively and $S$ be a generating inverse closed subset of $H$ with $ 0 \notin S$.  Let $\Gamma=Cay(H;S) $ be a $us$-Cayley graph,  $G=Aut(\Gamma)$ and $G_0$ be the  subgroup of
$G$ consisting of automorphisms of $\Gamma$ which fix  the vertex $0$. Then $G_0=Aut(H,S)$, where $Aut(H,S)$ is the subgroup of automorphisms of the group $H$ which stabilize the set $S$.   In other words, every automorphism $f$ of the graph $\Gamma$ which fixes the vertex $v=0$ is a group automorphism of the abelian group $H$ such that $f(S)=S$.
\end{thm}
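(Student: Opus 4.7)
The containment $Aut(H,S)\subseteq G_0$ is straightforward: every $\theta\in Aut(H,S)$ fixes $0$ and preserves the adjacency relation $y-x\in S$, hence lies in $G_0$. For the reverse, I would fix $f\in G_0$ and aim to show $f$ is a group automorphism of $H$ with $f(S)=S$. The second assertion is immediate, since $f$ fixes $0$ and preserves edges, so $f$ permutes $N(0)=S$.

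The plan rests on two claims. Claim~A asserts $f(s_1+s_2)=f(s_1)+f(s_2)$ for every $s_1,s_2\in S$. I would dispatch first the case $s_1+s_2\neq 0$: the $us$-property implies that the common neighbors of $0$ and $s_1+s_2$ in $\Gamma$ are precisely $\{s_1,s_2\}$ (or $\{s_1\}$ when $s_1=s_2$), and applying $f$ together with $us$ to the vertex $f(s_1+s_2)$ identifies $\{f(s_1),f(s_2)\}$ as the unique two-element decomposition of $f(s_1+s_2)$ in $S$, giving the additivity. The subcase $s_1+s_2=0$ lies outside the scope of $us$, and I expect this to be the main obstacle. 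My strategy is to introduce the auxiliary automorphism $g:=l_{-f(-s_1)}\circ f\circ l_{-s_1}$, verify $g\in G_0$ by the direct calculation $g(0)=0$, and use the already-proven case of Claim~A to show that $g$ agrees with $f$ on all of $S\setminus\{s_1\}$. Since $g|_S$ and $f|_S$ are both bijections of $S$, agreement on $|S|-1$ elements forces $g(s_1)=f(s_1)$, and unpacking this identity gives $f(-s_1)=-f(s_1)$.

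Claim~B (faithful action on the neighborhood) states that any $\chi\in G_0$ fixing $S$ pointwise is the identity of $\Gamma$. I would prove this by induction on the graph distance $d(0,v)$, the cases $d\leq 1$ being immediate. For $d(0,v)=k\geq 2$, pick a shortest expression $v=s_1+\cdots+s_k$ with $s_i\in S$ and set $w:=s_1+\cdots+s_{k-2}$; by induction $\chi$ fixes $w$. A direct $us$-computation shows that the common neighbors of $w$ and $v$ in $\Gamma$ are exactly $\{w+s_{k-1},w+s_k\}$, both at distance at most $k-1$ from $0$ and hence fixed by $\chi$. Therefore the common neighbors of $w$ and $\chi(v)$ coincide with those of $w$ and $v$, and a final application of $us$ forces $\chi(v)=v$.

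To finish, for each $s\in S$ I form the automorphism $g_s:=l_{-f(s)}\circ f\circ l_s\in G_0$. Claim~A gives $g_s|_S=f|_S$, so $f^{-1}g_s\in G_0$ fixes $S$ pointwise; by Claim~B, $g_s=f$ globally, which is precisely $f(s+x)=f(s)+f(x)$ for every $x\in H$. A routine induction on the word length of $b$ over $S$ then upgrades this to full additivity $f(a+b)=f(a)+f(b)$, so $f$ is a group automorphism of $H$; combined with $f(S)=S$, this places $f$ in $Aut(H,S)$, completing the proof.
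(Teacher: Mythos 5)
Your proposal is correct, but it takes a genuinely different route from the paper's. The paper proves additivity of $f$ directly on $S$-words: it first shows $f(v+w)=f(v)+f(w)$ for distinct $v,w\in S$ with $v+w\neq 0$ (the same $us$-at-the-image argument as in your Claim A), then gets $f(2u)=2f(u)$ and $f(-x)=-f(x)$ by contradiction arguments using $f^{-1}\in G_0$, and finally runs an induction on the word length of $v=x_1+\cdots+x_k$ over $S$, with a separate and somewhat delicate case analysis according to whether the summands are all identical, concluding because $S$ generates $H$. You instead (i) settle the inverse case $f(-s)=-f(s)$ by comparing $f$ with the auxiliary automorphism $l_{-f(-s)}\circ f\circ l_{-s}\in G_0$, exploiting the regular subgroup $L(H)\leq Aut(\Gamma)$, which the paper only brings in later for Theorem 3.2, and (ii) isolate a rigidity lemma (your Claim B: an automorphism fixing $0$ and $N(0)=S$ pointwise is the identity, proved by induction on distance via $us$-controlled common neighborhoods), after which the translates $g_s=l_{-f(s)}\circ f\circ l_s$ immediately give $f(s+x)=f(s)+f(x)$ for all $x\in H$ and hence full additivity by a routine word-length induction. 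Your route buys a cleaner endgame: the paper's step (4) case analysis is replaced by Claim B, which is of independent interest (it says $G_0$ acts faithfully on $N(0)$), and the translation trick handles the $v+w=0$ and $2u=0$ subtleties uniformly. The one place you should be a bit more explicit is the last step of Claim B when $s_{k-1}=s_k$: the common neighborhood of $w$ and $v$ is then a singleton, and writing $\chi(v)=w+s_{k-1}+b$ with $b\in S$ you must observe that $w+b$ is again a common neighbor of $w$ and $\chi(v)$, forcing $b=s_{k-1}$; this is a minor elaboration, not a gap. The paper's argument, by contrast, stays entirely inside elementary manipulations with $f$ and $f^{-1}$ and never needs the faithfulness lemma or the left-regular translations.
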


\begin{proof}
It is clear that $Aut(H)_{{S}} \leq G_0$, hence it is sufficient to show that  $G_0 \leq Aut(H)_{{S}}$.
Let $f \in G_0$.  We show that $f$ is an automorphism of the group $H$ such that $f(S)=S$.  We prove the theorem in some steps.\\
(1)  In the first step, let $v,w \in S, \ v \neq w  $ and $v + w \neq 0$. We wish to show that $f(v+w)=f(v)+f(w).$\\
    Since $v + w - v = w \in S$ and $v + w - w = v \in S, $   then $v + w$ is adjacent to vertices $v$ and  $w$. In other words,  $\{v + w, v\} \in E( \Gamma)$ and $\{v + w, w\} \in E( \Gamma)$. Now since $f$ is an automorphism of the graph $\Gamma$, then
     $\{f(v + w), f(v)\} \in E( \Gamma)$ and   $\{f(v + w), f(w)\} \in E( \Gamma)$.  Therefore  $f(v + w) -f(v) \in S $ and  $f(v + w)-f(w) \in S$.
     Thus  $0\neq f(v + w) =f(v) + s_1$
     and $ f(v + w) = f(w) + s_2$,  where  $s_1, s_2 \in S$. Hence we have  $0 \neq f(v)+s_1=f(w)+s_2$. Note that if $u \in S$, then $u-0=u \in S$, hence $\{0,u \}\in E(\Gamma)$,
      so that $\{f(0),f(u)\} = \{0, f(u)\}\in E(\Gamma)$.   Now since  $N(0) =S$ ($N(x)$ is the set of vertices of $\Gamma$
      which are adjacent to the vertex $x$),  then  $f(u) \in S$, and  thus  $f(S) = S$.
       We now have $0 \neq f(v) + s_1 = f(w) + s_2$ with $f(v), s_1,f(w), s_2 \in S$. Now, since $\Gamma=Cay(H;S) $ is a $us$-Cayley graph, then we have  $\{f(v) , s_1  \}=\{ f(w) , s_2 \}$. Now, nothing that $v \neq w  $ and $f$ is a permutation of the set $S$,  we conclude that $s_2=f(v),  s_1=f(w).$  Therefore,  we have  $f(v + w) = f(v) + s_1 = f(v) + f(w).  $ \

(2)  We now  show that if $u \in S$,   then  $f(u+u)=f(2u) = f(u)+f(u)=2f(u)$. In the first step, we assume  $2u
\neq 0$. Then,    $f(2u) \neq 0$.
  Since $2u - u = u \in S$,   then   $\{2u,u \} \in E(\Gamma)$ so that
$\{f(2u),f(u)\} \in E(\Gamma)$.  Thus   $f(2u) = f(u) + y$, where
$y \in S = f(S)$.   Therefore,  there
 is an $x \in S$ such that $y = f(x)$ and hence  we have $f(2u) = f(u) + f(x)$. We assert that $f(x) = f(u)$. On the contrary, assume that
 $f(x) \neq f(u)$.  Then,  since   $f^{-1}\in G_0$, and  by what is proved in (1), we have
     $2u = f^{-1}(f(2u)) = f^{-1}(f(u) + f(x)) = f^{-1}(f(u)) + f^{-1}(f(x)) = u + x$.
    Thus,  $x = u$, and hence we have  $f(x) = f(u)$,  which is  a contradiction. Therefore,  if $u \in S$ and $2u \neq 0$,   then  $f(2u) = 2f(u)$. \newline Now  let $ 2u = 0 $. We claim that $2f(u)=0=f(2u).$ Note that if $f(u) =
u$,  then  $2f(u) = 2u = 0 = f (0) = f(2u)$.
On the contrary,  assume that $2f(u) \neq 0 $.  Then,  by what is proved hitherto, we have  $f^{-1}(2f(u)) = 2f^{-1}(f(u)) = 2u=0=f^{-1}(0)$.
 Noting that $f^{-1}$ is a permutation of the set $H$, we deduce that $2f(u)=0$, which is a contradiction. We now conclude that $2f(u)=0.$
    \

(3) We now  show that if $x \in S$,   then   $f(-x) = - f(x)$. In the first step, we assume that  $2x
\neq 0$.
 Then $x \neq -x$, hence $f(x) \neq f(-x)$.     Thus,  if $ t = f(x) + f(-x)\neq 0$,
 then  by what is proved in (1),
 we have $f^{-1}(t) = f^{-1}(f(x) + f(-x)) =f^{-1}(f(x)) + f^{-1}(f( -x)) = x -x = 0 = f^{-1}(0)  $.
 Thus $t = 0$ which is  a contradiction.   Therefore,  we must
have $f(x) + f(-x) = 0$, and hence $f(-x) =- f(x) $. \\
 We now assume that $ 2x = 0$.
Then  $x = -x$. Now by what is proved in (2), we have  $f(2x) = f(0) =0=  2f(x)$.   Thus,    $f(x) = - f(x) =-f(- x)$ which implies that $f(-x) = - f(x)$.\\
Moreover, note that
 if $v,w \in S$ and $v+ w = 0$,   then   $w = -v$.  Hence,   we have
  $f(v + w) = f(0) = 0 = f(v) - f(v) = f(v) + f(-v)=f(v) + f(w)$. \

(4) We now show that for any $k\in \mathbb{N},\   k>1$, if $v=x_1+x_2+x_3+...+x_k$, where $x_i \in S, \  1\leq i \leq k,  $ then we have $f(v)=f(x_1)+f(x_2)+f(x_3)+\cdots+f(x_k)$. We prove by induction on $k$. If $k=2, $  then by what is proved hitherto, the assertion is true. Assume $k>2$. In the first step, assume that all the ${x_j}^,s$ are not identical, namely, there are some $x_r,x_t$ such that $x_r \neq x_t$. Then, we have $v=u+x_r + x_t,$ where $u = v-(x_r + x_t)$ and hence $u$ is a sum of $k-2$ elements of $S$. Since $ v-(u+x_r) = x_t$, then $\{v, (u+x_r) \} \in E(\Gamma),$ and  hence $\{f(v), f(u+x_r) \}\in E(\Gamma)$. Therefore,  $f(v)=f(u+x_r)+ f(s_1),  $  for some $s_1 \in S.$  Similarly, Since $ v-(u+x_t) = x_r$, then we have $f(v)=f(u+x_t)+ f(s_2),  $  for some $s_2 \in S$. By induction hypothesize we have, $ f(u+x_r)=f(u)+f(x_r)$  and $ f(u+x_t)=f(u)+f(x_t)$. Therefore we have,
$f(v)=f(u+x_r)+ f(s_1) = f(u)+f(x_r) + f(s_1) = f(u)+f(x_t) + f(s_2). $ We now deduce that $f(x_r) + f(s_1) = f(x_t) + f(s_2). $ Equivalently,  we have $f(x_r +   s_1) = f(x_t  +  s_2). $ Noting that $f$ is a permutation of $H=V(\Gamma)$, it follows that $x_r +   s_1 = x_t  +  s_2.$
Now by assumption of our theorem we have  $\{x_r, s_1  \} = \{x_t, s_2  \}. $ Noting that $x_r \neq x_t$, we have $ x_r= s_2, x_t= s_1,  $ and we have;\\

\centerline{$f(v)=f(u+x_r + x_t) = f(u+x_r)+ f(s_1)=f(u)+f(x_r) + f(x_t)$}
 \ \newline   Now, since by induction hypothesize, $u$ is a sum of $k-2 $ element of $S$, we  conclude that; \

\

\centerline{$f(v)=f(x_1)+f(x_2)+f(x_3)+\cdots+f(x_k)$}

  \  \newline We now consider the case $v=x_1+x_2+x_3+...+x_k$, where $x_i \in S, \  1\leq i \leq k,  $ and all of the ${x_i}^,s$ are identical. In other words, we want to show that if   $v=kx_1, $ then $f(v)=kf(x_1).  $  Nothing that $kx_1-(k-1)x_1 \in S, $ we have,   $\{kx_1, (k-1)x_1\} \in E(\Gamma)$.  Hence,  $\{f(kx_1), f((k-1)x_1) \in E(\Gamma) \}.  $ Thus $f(kx_1)= f((k-1)x_1) +f(s), $ for some $s \in S. $ We show that $s= x_1. $ In fact if $s \neq x_1$, then by what is proved heretofore we must have;  \

\

\centerline{$f(kx_1)= f((k-1)x_1)  +f(s)= f( (k-1)x_1+s) $}
\  \newline Hence,  $kx_1=(k-1)x_1 +s,  $ and thus $x_1= s, $  which is a contradiction. Now since $s=x_1, $ then by induction hypothesize we have $f(kx_1)= f((k-1)x_1)  +f(s)=(k-1)f(x_1)+f(x_1)=kf(x_1).$
\\Now, Nothing that $S$ is a generating subset of the abelian group $H$,   the theorem has been proved.
\end{proof}
We  now have enough tools to prove the following important result.

\begin{thm}Let $H$ be an abelian group written additively and $S$ is a generating inverse closed subset of $H$ with $ 0 \notin S$.  Let $\Gamma=Cay(H;S) $ be a $us$-Cayley graph.
Then   $Aut(\Gamma)=L(H) \rtimes A$,
  where $L(H) $ is the left regular representation of  the group $H$ and $A=Aut(H,S)$  is the group of all   automorphisms of the group $H$ which stabilizes the set $S$.

\end{thm}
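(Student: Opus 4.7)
The plan is to bootstrap Theorem 3.2 from the stabilizer computation in Theorem 3.1 together with the fact that $L(H)$ acts regularly on $V(\Gamma)=H$. The bulk of the analytic work has already been done in Theorem 3.1, so what remains is essentially an orbit-stabilizer style decomposition plus a verification that the resulting product is semidirect.

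First I would observe that $\Gamma=Cay(H;S)$ is vertex-transitive because $L(H)\le Aut(\Gamma)$ acts regularly on $H$. Pick an arbitrary $f\in G=Aut(\Gamma)$, set $v=f(0)$, and form $g=l_{-v}\circ f$. Since $l_{-v}\in L(H)\le G$, we have $g\in G$, and by construction $g(0)=0$, so $g\in G_0$. Theorem 3.1 tells us $G_0=Aut(H,S)=A$, hence $g\in A$ and $f=l_v\circ g\in L(H)\cdot A$. This gives $Aut(\Gamma)\subseteq L(H)\cdot A$; the reverse inclusion is immediate since $L(H)$ and $A$ both lie in $Aut(\Gamma)$.

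Next I would record the two ingredients that upgrade the product $L(H)\cdot A$ to a semidirect product. For the trivial intersection, any element of $L(H)\cap A$ is simultaneously a left translation $l_v$ and a group automorphism of $H$; as a group automorphism it fixes $0$, so $v=l_v(0)=0$ and the element is the identity. For normality, the conjugation identity $\theta^{-1}l_h\theta=l_{\theta^{-1}(h)}$ (valid for any $\theta\in Aut(H)$ and any $h\in H$, and already observed in the preliminaries) shows $A$ normalizes $L(H)$ inside $G$. Together with the decomposition in the previous paragraph, this yields $Aut(\Gamma)=L(H)\rtimes A$.

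Since Theorem 3.1 has already carried out the delicate part, namely forcing a $0$-fixing automorphism of $\Gamma$ to be a group automorphism stabilizing $S$, I do not expect any serious obstacle here; the only point that requires a line of care is the normality verification, and that is handled by the one-line conjugation formula. Thus the proposal is: invoke vertex-transitivity to write any $f\in Aut(\Gamma)$ as (translation)$\circ$(stabilizer element), apply Theorem 3.1 to identify the stabilizer piece with $A$, and conclude the semidirect product structure from $L(H)\cap A=\{1\}$ and the normalization of $L(H)$ by $A$.
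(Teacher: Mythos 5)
Your proof is correct, and it rests on Theorem 3.1 exactly as the paper's does; the only genuine difference is the route to the factorization $Aut(\Gamma)=L(H)\,A$. The paper argues by counting: orbit--stabilizer gives $|G|=|V(\Gamma)||G_0|=|L(H)||A|$ (using $G_0=A$ from Theorem 3.1), then it verifies $AL=LA$ via the conjugation identity so that $AL$ is a subgroup, and concludes $G=AL$ because the orders agree. You instead factor each $f\in G$ directly as $f=l_{f(0)}\circ\bigl(l_{-f(0)}\circ f\bigr)$, with the second factor fixing $0$ and hence lying in $G_0=A$ by Theorem 3.1 --- the standard transitive-subgroup-times-point-stabilizer decomposition. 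This buys you a shorter argument that avoids both the cardinality count and the need to check in advance that $L(H)A$ is a subgroup (and it would work verbatim for infinite $H$, though that is moot here). On the semidirect structure you are, if anything, more careful than the paper: the paper simply asserts that $L(H)$ is normal in $G$, whereas you point to $\theta^{-1}l_h\theta=l_{\theta^{-1}(h)}$ to get that $A$ normalizes $L(H)$, which combined with $G=L(H)A$ (and $L(H)$ normalizing itself) yields $L(H)\trianglelefteq G$; making that last half-line explicit would leave no gap at all. The trivial-intersection check is the same in both arguments.
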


\begin{proof}
Let $\Gamma = Cay(H;S)$ and let $G=Aut(\Gamma)$ be the automorphism group of the graph $\Gamma$. For the vertex $v=0$ in the graph $\Gamma$,  let  $G_0$ be the stabilizer subgroup of this vertex. The graph $ \Gamma $ is a  vertex-transitive graph
because it is a Cayley graph.  Thus,
 by the orbit-stabilizer theorem we have $|V(\Gamma)|=\frac{|G|}{|G_0|}.$ Therefore, $|G|=|V(\Gamma)||G_0| $.
 It is clear that if $L=L(H)=\{l_h | h \in  H \}$, $l_h(x)=h+x$ for every $x \in H$, then $|L|=|H|=|V(\Gamma)|$. Indeed, it is easy to see that $L(H) \cong H$.
   From  Theorem 3.1. we have  $G_0=Aut(H)_{{S}} = A$, and hence $|G|=|L||A|.$  It is easy to see that   $L \cap A = \{e\}$, where $e$ is the identity automorphism of the graph $\Gamma$.
We assert that $AL=\{al | \ a\in A, \ l \in L  \}$ is a subgroup of the group $G$.  It is sufficient to show that $AL=LA$. If $a\in A$ and $l \in L$, then there is some $v \in H$ such that $l= l_v$. Thus, for every $x \in H$ we have  $(al)(x)=a(l_v(x))=a(v+x)=a(v)+a(x)=l_{a(v)}(a(x))= (l_{a(v)}a)(x)=(l_1a)(x)$, where $l_{a(v)}=l_1 \in L$. Hence, $al=l_1a \in LA $, thus $AL \subseteq LA$. Since $|AL|=|LA|$, therefore $AL=LA$.  We now deduce that $AL$ is a subgroup of the group $G$.  We know that  $|AL|= \frac{|A||L|}{|A\cap L|}=|A||L|=|G|$.  Therefore $G=AL.$ It is easy to see that $L$ is a normal subgroup of the group $G$, hence we have $Aut(\Gamma)=G=L\rtimes A$.
\end{proof}

\

{\bf Some applications }\\

We now show how Theorem 3.2. can help us in determining the automorphism groups of the graphs
which are appeared in the introduction section  of this paper.

\begin{thm} Let $c,d,m$ be integers such that $1 \leq c <d$, $d \geq 5$ and $m\geq 1$. Let 
$n=cd^m$ and $\Omega_1=\{ \pm 1,...,\pm 
d^{m-1} \}$. Let $\Omega=\{1,-1\} \cup B$ where $B$ is an inver closed subset of $\Omega$.  Let $\Gamma=Cay (\mathbb{Z}_n;\Omega )$.     Then $ 
Aut(\Gamma) \cong \mathbb{D}_{2n} $, where $\mathbb{D}_{2n} $ is the dihedral group of order $2n$.
\end{thm}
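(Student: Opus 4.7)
The plan is to invoke Theorem 3.2 directly. First I would verify its two hypotheses: that $\Omega$ generates $\mathbb{Z}_n$, and that $\Omega$ has the $us$ property. Since $1\in\Omega$, the first point is immediate. For the second, by construction $\Omega\subseteq\Omega_1=\{\pm 1,\pm d,\pm d^2,\ldots,\pm d^{m-1}\}$, and Example 1.4 tells us that $\Omega_1$ is a $us$-subset of $\mathbb{Z}_n$. Inheritance of the $us$ property to any subset is automatic from the definition, since any quadruple from $\Omega$ that double-represents some $0\neq g\in\mathbb{Z}_n$ already sits inside $\Omega_1$ and is therefore forced to be a single multiset.

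Having checked the hypotheses, Theorem 3.2 yields $Aut(\Gamma)=L(\mathbb{Z}_n)\rtimes A$, where $A=Aut(\mathbb{Z}_n,\Omega)$. The main task is then to compute $A$. Every group automorphism of $\mathbb{Z}_n$ takes the form $\theta_a(x)=ax$ with $\gcd(a,n)=1$, and $\theta_a\in A$ forces $\theta_a(1)=a\in\Omega$, hence $a\in\{\pm 1,\pm d,\pm d^2,\ldots,\pm d^{m-1}\}$. The pivotal observation is that because $n=cd^m$, any $a=\pm d^k$ with $k\geq 1$ satisfies $d\mid\gcd(a,n)$, contradicting $\gcd(a,n)=1$. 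Hence the only admissible values are $a=\pm 1$, and conversely $\theta_1$ and $\theta_{-1}$ plainly preserve the inverse-closed set $\Omega$. Thus $A=\{\theta_1,\theta_{-1}\}\cong\mathbb{Z}_2$.

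The concluding identification is routine: the conjugation action of $\theta_{-1}$ on $L(\mathbb{Z}_n)$ sends $l_v$ to $l_{-v}$, i.e.\ it is the inversion $v\mapsto -v$ on $\mathbb{Z}_n$. Therefore the semidirect product $L(\mathbb{Z}_n)\rtimes A$ is exactly the dihedral group $\mathbb{D}_{2n}$ of order $2n$. There is no serious obstacle in the argument: Theorem 3.2 has done the heavy lifting, and the only content specific to this family is the short divisibility argument that singles out $a=\pm 1$ from the candidates in $\Omega_1$.
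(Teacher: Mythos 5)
Your proposal is correct and follows essentially the same route as the paper: check the $us$ property (inherited from $\Omega_1$ of Example 1.4), apply Theorem 3.2, and observe that since every $d^k$ with $k\geq 1$ divides a common factor $d$ with $n=cd^m$, the only elements of $\Omega$ that can be the image of $1$ under a group automorphism are $\pm 1$, giving $A\cong\mathbb{Z}_2$ and hence $\mathbb{Z}_n\rtimes\mathbb{Z}_2\cong\mathbb{D}_{2n}$. Your explicit remarks that the $us$ property passes to subsets and that $\theta_{-1}$ acts on $L(\mathbb{Z}_n)$ by inversion are slightly more careful than the paper's wording, but the argument is the same.
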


\begin{proof}
It is easy to check  that $\Gamma$ is a $us$-Cayley  graph.
Hence, from Theorem 3.2. it follows that $\Gamma = L(\mathbb{Z}_n) \rtimes A$ where $A=\{f|f\in Aut(\mathbb{Z}_n), f(\Omega)=\Omega  \}$. If  $f\in A$, then $f(1)$ is 
generating element of the cyclic group $\mathbb{Z}_n$, because $1$ is a generating element of the cyclic group  $\mathbb{Z}_n$. Since $f(\Omega)=\Omega$  and 1 and -1 are 
unique elements of $\Omega$ which can  generate the cyclic group $\mathbb{Z}_n)$, it follows that $f(1)=1$ or $f(1)=-1$. Note that every 
automorphism of the group $\mathbb{Z}_n$ is determined, when $f(1)$ is defined.  Now, it follows that $|A|=2$, hence $A \cong \mathbb{Z}_2$. Noting that $ L(\mathbb{Z}_n) \cong \mathbb{Z}_n$, we conclude that $ Aut(\Omega) \cong \mathbb{Z}_n \rtimes \mathbb{Z}_2 \cong \mathbb{D}_{2n}$.

\end{proof}

By a similar argument, we can show that  if 
  $4\neq n\geq 3$ be an integer,  then $ Aut(C_n) \cong \mathbb{D}_{2n} $, where $\mathbb{D}_{2n} $ is the dihedral group of order $2n$.
 
In the sequel, we need the  following fact [2].

\begin{thm}
Let $\Gamma $ be a graph with $n$ connected components  $\Gamma_{1},\Gamma_{2},\cdots,\Gamma_{n}$, where $\Gamma_{i}$ is isomorphic to $\Gamma_1$ for all $i \in [n]= \{1,\cdots,n \}=I$. Then  we have  $Aut(\Gamma)=Aut(\Gamma_1)wr_{I}\mbox{Sym}([n])$.
\end{thm}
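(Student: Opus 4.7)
The plan is to exhibit $Aut(\Gamma)$ as the semidirect product $\left(\prod_{i=1}^n Aut(\Gamma_i)\right) \rtimes \mathrm{Sym}([n])$, which, under the standard identification $\prod_{i=1}^n Aut(\Gamma_i) \cong Aut(\Gamma_1)^n$ with $\mathrm{Sym}([n])$ acting by coordinate permutation, is precisely the wreath product $Aut(\Gamma_1)\, wr_I\, \mathrm{Sym}([n])$.

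First I would observe that every automorphism of $\Gamma$ permutes its connected components. If $\phi \in Aut(\Gamma)$ and $u, v$ lie in the same component $\Gamma_i$, then a $u$--$v$ path maps to a $\phi(u)$--$\phi(v)$ path, so $\phi(V(\Gamma_i))$ is contained in a single component; bijectivity of $\phi$ together with $|V(\Gamma_i)| = |V(\Gamma_1)|$ for every $i$ forces $\phi$ to map each $\Gamma_i$ isomorphically onto some $\Gamma_{\sigma(i)}$. This yields a homomorphism $\pi : Aut(\Gamma) \to \mathrm{Sym}([n])$ sending $\phi$ to the induced permutation $\sigma$ of components.

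Next I would check that $\pi$ is surjective and identify its kernel. For each $j$ fix an isomorphism $\alpha_j : \Gamma_1 \to \Gamma_j$ (with $\alpha_1 = \mathrm{id}$), and for any $\sigma \in \mathrm{Sym}([n])$ define $\widehat{\sigma} \in Aut(\Gamma)$ to act on $\Gamma_i$ as $\alpha_{\sigma(i)}\alpha_i^{-1}$; then $\pi(\widehat{\sigma}) = \sigma$, so $\pi$ is surjective. The kernel $K = \ker\pi$ consists of automorphisms that stabilize each $\Gamma_i$ setwise, and because the components are pairwise disjoint and have no edges between them, any tuple of automorphisms of the individual $\Gamma_i$'s glues to an element of $K$; hence $K \cong \prod_{i=1}^n Aut(\Gamma_i) \cong Aut(\Gamma_1)^n$.

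Finally I would verify the semidirect product structure. The map $\sigma \mapsto \widehat{\sigma}$ is a homomorphism provided the $\alpha_j$ are chosen coherently (the calculation $\widehat{\sigma\tau} = \widehat{\sigma}\widehat{\tau}$ reduces to $\alpha_{\sigma\tau(i)}\alpha_i^{-1} = (\alpha_{\sigma\tau(i)}\alpha_{\tau(i)}^{-1})(\alpha_{\tau(i)}\alpha_i^{-1})$, which is automatic), giving a section of $\pi$; together with $K \cap \widehat{\mathrm{Sym}([n])} = \{e\}$ this produces $Aut(\Gamma) = K \rtimes \widehat{\mathrm{Sym}([n])}$. A direct computation of the conjugation action of $\widehat{\sigma}$ on $K$ then shows that if we identify $K$ with $Aut(\Gamma_1)^n$ via $(f_1, \ldots, f_n) \mapsto \bigsqcup_i \alpha_i f_i \alpha_i^{-1}$, conjugation by $\widehat{\sigma}$ corresponds exactly to the permutation $(f_1, \ldots, f_n) \mapsto (f_{\sigma^{-1}(1)}, \ldots, f_{\sigma^{-1}(n)})$. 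This is the wreath-product action, completing the identification. The main subtle step will be the last one: choosing the transition isomorphisms $\alpha_j$ coherently so that the induced action on $K$ is the untwisted coordinate permutation and not a cocycle-twisted version of it.
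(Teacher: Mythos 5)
Your proof is correct, but there is nothing in the paper to compare it against: the paper does not prove this statement at all, it simply quotes it as a known fact with a citation to Cameron's chapter (reference [2]) and then uses it in Remark 3.6 to handle $M_6$. Your argument is the standard one and is complete: automorphisms permute components (connectivity plus a cardinality/injectivity count gives that each $\Gamma_i$ is carried isomorphically onto some $\Gamma_{\sigma(i)}$), the induced map $\pi:Aut(\Gamma)\to \mathrm{Sym}([n])$ is a surjective homomorphism with kernel $K\cong\prod_i Aut(\Gamma_i)$, the section $\sigma\mapsto\widehat{\sigma}$ built from fixed isomorphisms $\alpha_j:\Gamma_1\to\Gamma_j$ splits it, and conjugation on $K$, read through the identification $(f_1,\dots,f_n)\mapsto\bigsqcup_i\alpha_i f_i\alpha_i^{-1}$, is exactly the coordinate-permutation action, i.e.\ the wreath product $Aut(\Gamma_1)\,wr_I\,\mathrm{Sym}([n])$. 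One small remark: the ``subtle step'' you flag at the end is not actually an issue. No coherence condition on the $\alpha_j$ is needed: your own computation $\alpha_{\sigma\tau(i)}\alpha_i^{-1}=\bigl(\alpha_{\sigma\tau(i)}\alpha_{\tau(i)}^{-1}\bigr)\bigl(\alpha_{\tau(i)}\alpha_i^{-1}\bigr)$ holds for any choice (only $\alpha_1=\mathrm{id}$ is a harmless normalization), and any potential cocycle twisting is absorbed precisely by conjugating the $i$-th factor of $K$ by $\alpha_i$ in the identification with $Aut(\Gamma_1)^n$, which is what your final computation shows. So the argument closes with no gap.
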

We now provide a proof which shows that $Aut(M_n) \cong \mathbb{D}_{2n}$, where $M_n$ is the   M\"{o}bius ladder of order $n>6$.
\begin{thm} Let $n>7$ be an integer. Then $Aut(M_n) \cong \mathbb{D}_{2n}$, where $M_n$ is the M\"{o}bius ladder of order $n$ and $\mathbb{D}_{2n}$ is the dihedral group of order $2n$.

\end{thm}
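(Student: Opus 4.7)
The plan is to apply Theorem 3.2 directly. By Example 1.5, whenever $n>7$ the M\"obius ladder $M_n$ is a $us$-Cayley graph on $\mathbb{Z}_n$, with connection set $S=\{1,-1,k\}$ if $n=2k$ and $T=\{1,-1,k,k+1\}=\{\pm 1,\pm k\}$ if $n=2k+1$ (using $k+1\equiv -k\pmod{2k+1}$). Since $1$ lies in the connection set, that set generates $\mathbb{Z}_n$. Theorem 3.2 therefore yields $Aut(M_n)=L(\mathbb{Z}_n)\rtimes A$, where $A$ is the subgroup of $Aut(\mathbb{Z}_n)$ fixing the connection set setwise. Since $L(\mathbb{Z}_n)\cong\mathbb{Z}_n$, the problem reduces to showing $|A|=2$: together with the standard (inversion) action of $\mathbb{Z}_2$ on $\mathbb{Z}_n$, this will give $Aut(M_n)\cong\mathbb{Z}_n\rtimes\mathbb{Z}_2\cong\mathbb{D}_{2n}$.

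Every element of $Aut(\mathbb{Z}_n)$ has the form $\phi_a(x)=ax$ with $a$ a unit modulo $n$, and $\phi_{\pm 1}$ always lies in $A$. In the even case $n=2k$, every unit $a$ is odd, so $ak\equiv k\pmod{2k}$; consequently $\phi_a(S)=\{a,-a,k\}$, and the condition $\phi_a(S)=S$ collapses to $\{a,-a\}=\{1,-1\}$, forcing $a=\pm 1$. Hence $|A|=2$ in this case.

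The odd case $n=2k+1$ is the main obstacle. Here $\phi_a(T)=\{\pm a,\pm ak\}$, so the requirement $\phi_a(T)=T$ forces $a\in\{\pm 1,\pm k\}$. The first two values always lie in $A$, and the key step is to rule out $a=\pm k$. If $a=\pm k$ were admissible, then $ak\equiv \pm 1\pmod{2k+1}$, i.e.\ $k^2\equiv\pm 1\pmod{2k+1}$. The central calculation is $(2k)^2\equiv 1\pmod{2k+1}$, i.e.\ $4k^2\equiv 1$; combined with $k^2\equiv\pm 1$ this would force $4\equiv\pm 1\pmod{2k+1}$, so $2k+1$ divides $3$ or $5$, hence $n\in\{3,5\}$. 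Both values are excluded by $n>7$, so no $a=\pm k$ lies in $A$, and $|A|=2$ in the odd case as well. Assembling the two cases completes the proof.
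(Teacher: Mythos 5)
Your proposal is correct, and its skeleton is the same as the paper's: invoke Example 1.5 to see $M_n$ is a $us$-Cayley graph for $n>7$, apply Theorem 3.2 to get $Aut(M_n)=L(\mathbb{Z}_n)\rtimes A$ with $A=Aut(\mathbb{Z}_n,S)$, and then show $|A|=2$ with the nontrivial element acting by inversion, so that the semidirect product is $\mathbb{D}_{2n}$. Where you genuinely diverge is in the computation of $A$, especially in the odd case. The paper works additively: writing $S=\{1,-1,k,k+1\}$ with $-k=k+1$ in $\mathbb{Z}_{2k+1}$, it assumes $f(1)=k$ or $f(1)=k+1$ and runs four subcases ($f(k)=\pm 1$ in each), using the homomorphism property $f(k+1)=f(k)+f(1)$ to force small values of $k$ that contradict $k\geq 4$. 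You instead use the explicit description $Aut(\mathbb{Z}_n)=\{\phi_a: x\mapsto ax \mid \gcd(a,n)=1\}$, reduce the condition $\phi_a(T)=T$ for $a=\pm k$ to $k^2\equiv\pm 1\pmod{2k+1}$, and kill it with the single identity $4k^2=(2k)^2\equiv 1\pmod{2k+1}$, which pins $2k+1$ to a divisor of $3$ or $5$; your even-case argument (units mod $2k$ are odd, so $ak\equiv k$) likewise replaces the paper's remark that $k$ cannot generate $\mathbb{Z}_{2k}$. Your route is shorter and more transparent, and it incidentally explains exactly which small $n$ fail ($n=3,5$ odd), while the paper's argument stays entirely inside the additive formalism it set up in Theorem 3.1 and avoids appealing to the multiplicative structure of $Aut(\mathbb{Z}_n)$; both are complete proofs of the stated result.
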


\begin{proof} We know that there are two distinct cases, that is, (i) $n$ is an even integer, and (ii) $n$ is an odd integer.\

(i) Let $n=2k$ be an even integer, where $k>3$. We know that $M_{2k} \cong Cay(\mathbb{Z}_{2k};S)$, where $S=\{1,-1=2k-1,k \}$. From Theorem 3.2. we have $Aut(M_{2k}) \cong L(\mathbb{Z}_{2k}) \rtimes A$, where $A=\{ f \in Aut(\mathbb{Z}_{2k}) | f(S)=S \}$.
If  $f\in A$, then $f(1)$ is generating element of the cyclic group $\mathbb{Z}_{2k}$, because $1$ is a generating element of the cyclic group  $\mathbb{Z}_{2k}$. Since $f(S)=S$, it follows that $f(1)=1$ or $f(1)=-1$, because $k$ is a divisor of $n$, hence $k$ can not be a generating element of the cyclic group  $\mathbb{Z}_{2k}$. We now conclude, by what is done in the proof of Proposition 3.3. that
$Aut(M_n)=Aut(M_{2k}) \cong \mathbb{Z}_{2k} \rtimes \mathbb{Z}_{2} \cong \mathbb{D}_{4k}=\mathbb{D}_{2n}$. \

  (ii)  Let $n=2k+1$ be an odd integer, where $k\geq4$. We know that $M_{2k+1} \cong Cay(\mathbb{Z}_{2k+1};S)$, where $S=\{1,-1=2k,k,k+1 \}$. From Theorem 3.2. we have $Aut(M_{2k+1}) \cong L(\mathbb{Z}_{2k+1}) \rtimes A$, where $A=\{ f \in Aut(\mathbb{Z}_{2k+1}) | f(S)=S \}$.
If  $f\in A$, then $f(1)$ is generating element of the cyclic group $\mathbb{Z}_{2k+1}$, because $1$ is a generating element of the cyclic group  $\mathbb{Z}_{2k+1}$. Since $f(S)=S$, it follows that $f(1)\in S$. Note that in $\mathbb{Z}_{2k+1}$ we have $-k=k+1.$\\
 We claim that $f(1) \neq k$. On the contrary, let  $f(1)=k$.  Then $f(-1)=-k=k+1$.
Thus $f(k)=1$, or $f(k)=-1$.  \\
 If $f(k)=1$, then we have
  $-1=f(-k)=f(k+1)=f(k)+f(1)=1+k=-k$, and so $1=k$, which is impossible. If $f(k)=-1$, then we have
  $1=-f(k)=f(-k)=f(k+1)=f(k)+f(1)=-1+k$, and hence $2=k$, which is also impossible.\\
  We also assert that, $f(1) \neq k+1$. On the contrary, let  $f(1)=k+1$.  Then $f(-1)=-(k+1)=k$.
Thus $f(k)=1$, or $f(k)=-1$. \\
 If $f(k)=1$, then we have
  $-1=f(-k)=f(k+1)=f(k)+f(1)=1+k+1$, and thus $k=-3$.   Therefore, $3=-k=k+1$, thus $k=2$,  which is impossible. \\
   If $f(k)=-1$, then we have
  $1=-f(k)=f(-k)=f(k+1)=f(k)+f(1)=-1+k$, and hence $2=k$, which is also impossible.\

  We now deduce that $f(1)=1$ or $f(1)=-1$. Therefore,  by what is done in the proof of Proposition 3.3. we conclude that
$Aut(M_n)=Aut(M_{2k+1}) \cong \mathbb{Z}_{2k+1} \rtimes \mathbb{Z}_{2} \cong \mathbb{D}_{4k+2}=\mathbb{D}_{2n}$.

\end{proof}

\begin{rem}It is easy to  see that if $n\in \{ 4,5 \}$ then $M_n \cong K_n$, the complete graph of order $n$. Therefore, in these cases we have $Aut(M_n) \cong Sym([n])$, where $[n]=\{1,2,...,n \}$.\\
It is easy to see that if $n=7$, then $\overline{M}_n \cong C_7$, where $\overline{M}_n$ is the complement of the graph $M_n$. We know that if $G$ is a graph then $Aut(\overline{G})=Aut(G)$. Therefore, from Proposition 3.3. it follows that if $n=7$, then
$Aut(M_n) \cong \mathbb{D}_{2n}$. In other words, we have shown that if $n>6$ then $Aut(M_n) \cong \mathbb{D}_{2n}$.\\
Also,  It is easy to see that if $n=6$, then $\overline{M}_n$ is the disjoint union of two 3-cycle. Therefore, since $Aut(C_3) \cong Sym([3])$, then from Theorem 3.4.   we conclude that $Aut(M_6) \cong Sym([3])wr_ISym([2])$, where $I=\{1,2\}$.
\end{rem}

We now proceed to determine the automorphism group of a class of graphs which is important in some aspects in interconnection networks. An interconnection network can be represented as an undirected graph
where a processor is represented as a  vertex  and a communication
channel between processors as an edge between corresponding
vertices. Measures of the desirable properties for interconnection
networks include degree, connectivity, scalability, diameter, fault
tolerance, and symmetry.  The main aim of this subsection is to study the
symmetries of some classes of Cayley graphs. Note that  in designing    interconnection networks Cayley graphs play a vital role.
As we mentioned in Example 1.6. for the graph $k$-ary   $n$-cube  $ Q_n ^k$ we have 
$ Q_n ^k \cong Cay(\mathbb{Z}_{k}^n; \Omega )$, where $\mathbb{Z}_{k}$ is
the cyclic group of order $k$, and $\Omega=\{ \pm e_i \  | \  1\leq i \leq n \}, $ where  $e_i = (0, ..., 0, 1, 0, ..., 0)$,  with 1    at the $i$-th position. We can easily check    that the $k$-ary   $n$-cube  $ Q_n ^k$  is a $us$-Cayley  graph when $k \neq 4$.
   Note that each vertex of $ Q_n ^k$
has degree $2n$ when $k \geq 3$,
and $n$ when $k = 2$. Obviously, $ Q_1 ^k$
is a cycle of length $k$, $ Q_n ^2$ is
an $n$-dimensional hypercube. Figure 1.  illustrates\
$  Q_1^6,   \  Q_2^5,$   and $  Q_3^3$. \

\begin{figure}[ht]
\centerline{\includegraphics[width=7.9 cm]{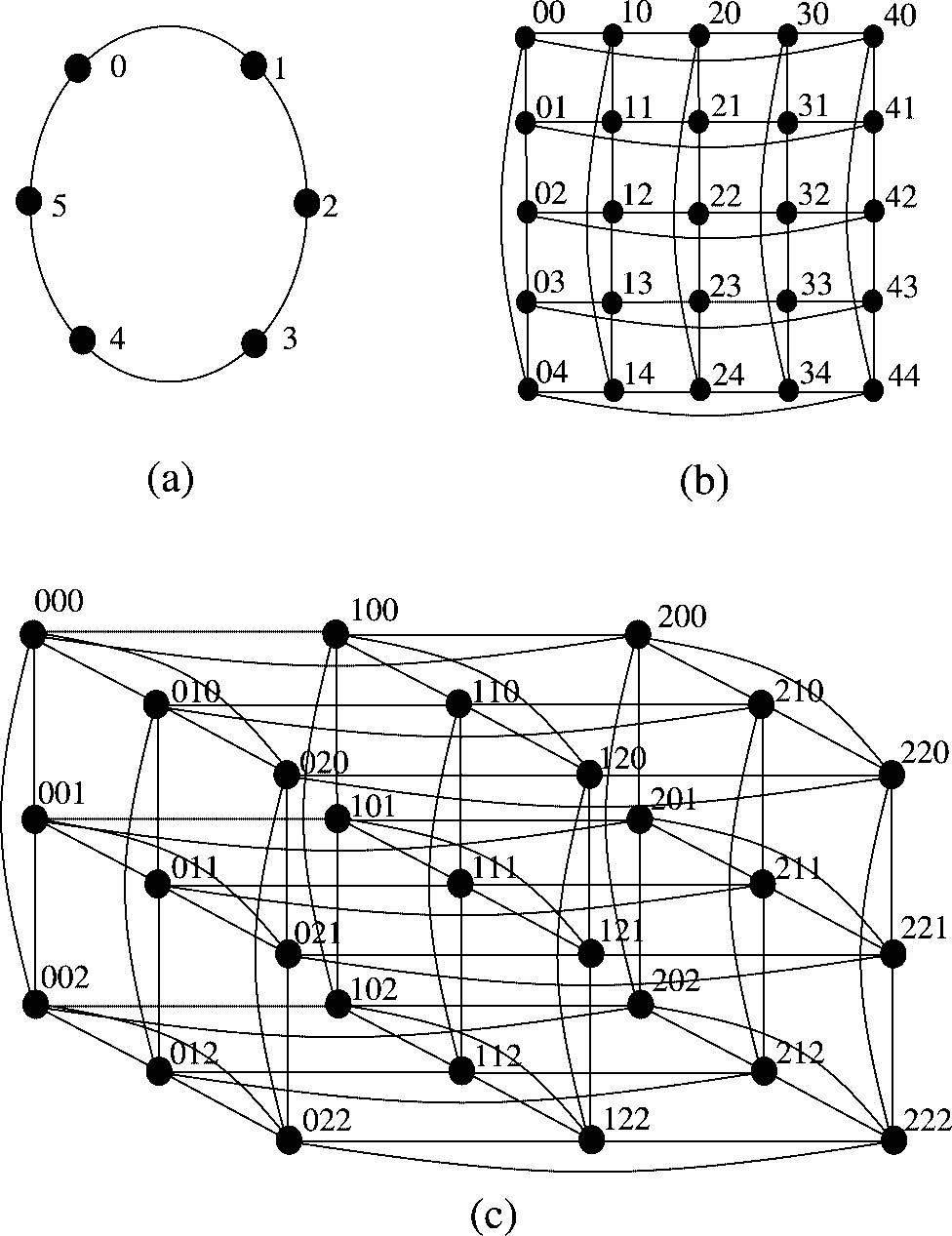}}
\caption{\label{}\small $(a)\  Q_1^6,  (b)\  Q_2^5,\   and\  (c)\  Q_3^3$.}
\end{figure}

There are various works concerning various topological and structural properties of  this class of graphs in the literature, and some of  the recent papers include [4,6,15,17].\

We now wish to determine the automorphism group of the graph $k$-ary   $n$-cube  $ Q_n ^k$ for the case $k \neq 4$,  explicitly.  In particular, we determine the order of the group $Aut(Q_n ^k)$. Also as a consequence, we re-show that  the graph $k$-ary   $n$-cube  $ Q_n ^k$ is a symmetric graph.
In the following theorem   $\Omega=\{ \pm e_i \  | \  1\leq i \leq n \}, $ where  $e_i = (0, ..., 0, 1, 0, ..., 0)$,  with 1    at the $i$-th position.

\begin{thm}   Let $\Gamma= Q_n ^k$ be  the  $k$-ary   $n$-cube. If $k=2$, then $Aut(\Gamma) \cong \mathbb{Z}_2^n \rtimes Sym([n])$, and hence
$|Aut(\Gamma)|=(2^n)(n!)$. If $4 \neq k>2$, then $Aut(\Gamma) \cong \mathbb{Z}_k^n \rtimes A$ and $|Aut(\Gamma)|=(k^n)(2^n)(n!)$, where $A$ is the subgroup of automorphisms  of the group $\mathbb{Z}_k^n$ which fix the set $\Omega$. Moreover, $\Gamma$ is is a symmetric graph.

\end{thm}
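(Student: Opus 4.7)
My plan is to apply Theorem 3.2 in both cases and then explicitly identify the stabilizer subgroup $A = Aut(\mathbb{Z}_k^n, \Omega)$ by exploiting the direct sum decomposition $\mathbb{Z}_k^n = \bigoplus_{i=1}^n \mathbb{Z}_k e_i$. Example 1.3 and Example 1.6 already guarantee that $Q_n^k$ is a $us$-Cayley graph for $k=2$ and for $k>2$ with $k \neq 4$, so Theorem 3.2 immediately yields $Aut(\Gamma) = L(\mathbb{Z}_k^n) \rtimes A$ in each case. The remaining work is the computation of $A$ together with a short verification of symmetry.

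For $k = 2$, note that $-e_i = e_i$, so $\Omega = \{e_1,\ldots,e_n\}$ is an $\mathbb{F}_2$-basis of $\mathbb{Z}_2^n$. Any $f \in A$ must permute this basis, and conversely any permutation of the basis extends uniquely by $\mathbb{F}_2$-linearity to a group automorphism fixing $\Omega$ setwise. Hence $A \cong Sym([n])$ and $Aut(\Gamma) \cong \mathbb{Z}_2^n \rtimes Sym([n])$ of order $2^n \cdot n!$, as claimed.

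For $k > 2$ with $k \neq 4$, the set $\Omega = \{\pm e_i : 1 \leq i \leq n\}$ has $2n$ elements. Any $f \in A$ must send each $e_i$ into $\Omega$, so $f(e_i) = \varepsilon_i e_{\pi(i)}$ for some $\varepsilon_i \in \{\pm 1\}$ and some map $\pi : [n] \to [n]$; since $f$ is a bijection carrying generators of distinct cyclic summands of order $k$ to generators of distinct cyclic summands of order $k$, the map $\pi$ must lie in $Sym([n])$. Conversely, any such signed permutation of the basis extends uniquely to a group automorphism of $\mathbb{Z}_k^n$ preserving $\Omega$. This identifies $A$ with the hyperoctahedral group $\mathbb{Z}_2^n \rtimes Sym([n])$, which has order $2^n \cdot n!$, giving $|Aut(\Gamma)| = k^n \cdot 2^n \cdot n!$.

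Finally, to show that $\Gamma$ is symmetric I would use the criterion recalled in the Preliminaries: since $\Gamma$ is vertex-transitive (being a Cayley graph), it suffices to verify that $G_0 = A$ acts transitively on $N(0) = \Omega$. Given $\varepsilon e_i, \delta e_j \in \Omega$, the signed permutation sending $e_i$ to $(\varepsilon \delta)\, e_j$ (and defined arbitrarily on the remaining basis vectors) belongs to $A$ and carries $\varepsilon e_i$ to $\delta e_j$. There is no serious obstacle in this argument; the main point worth emphasizing is that the $k=4$ exclusion is forced only by the failure of the $us$-property (as already noted in Example 1.6), not by any issue with the stabilizer computation, so once Theorem 3.2 is invoked the theorem reduces to the bookkeeping above.
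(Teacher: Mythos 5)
Your proposal is correct and follows essentially the same route as the paper: invoke Theorem 3.2, identify $A=Aut(\mathbb{Z}_k^n,\Omega)$ with the (signed) permutations of the basis $\{e_1,\dots,e_n\}$ (giving $n!$ for $k=2$ and $2^n n!$ for $k>2$, $k\neq 4$), and then deduce symmetry from the transitive action of $G_0=A$ on $N(0)=\Omega$. The only cosmetic difference is that you name $A$ structurally as the hyperoctahedral group $\mathbb{Z}_2^n\rtimes Sym([n])$ where the paper just counts $|B|=2^n n!$, and your transitivity witness (sending $e_i\mapsto(\varepsilon\delta)e_j$ within a signed permutation) is stated a bit more carefully than the paper's.
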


\begin{proof}    We know that  $\Gamma= Q_n ^k = Cay(\mathbb{Z}_{k}^n; \Omega )$, where $\mathbb{Z}_{k}$ is
the cyclic group of order $k$, and $\Omega=\{ \pm e_i \  | \  1\leq i \leq n \}, $ where  $e_i = (0, ..., 0, 1, 0, ..., 0)$,  with 1    at the $i$-th position. It is clear   that if $k \neq 4$, then  the graph $\Gamma$  is a $us$-Cayley  graph. Therefore, from Theorem 3.2. we have, $Aut(\Gamma)=L(\mathbb{Z}_{k}^n) \rtimes \cong \mathbb{Z}_{k}^n \rtimes A$,
where $A=\{f| f\in Aut(\mathbb{Z}_{k}^n), f(\Omega) =\Omega \}$. \\
In the first step, let $k=2$. Then, $\Omega=\{  e_i \  | \  1\leq i \leq n \}, $ because in the group $\mathbb{Z}_{2}$ we have $1=-1$, and hence $\Omega$ is an inverse-closed set. If $f\in A$,  then $f|_{\Omega}$, the restriction of $f$ to $\Omega$, is a permutation of the set $\Omega$.  Note  that each element $v$ of the group $\mathbb{Z}_{2}^n$ can be represented uniquely in the form $x=\sum_{i=1}^{n}a_ie_i$, where $a_i \in \mathbb{Z}_2$. Now, it is easy to see that the mapping $\phi : A \rightarrow Sym(\Omega)$, defined by the rule $\phi(f)=f|_{\Omega}$  for every $f\in A$,  is an injection. Hence, we have $|A|\leq n!$. On the other hand, if $g\in Sym(\Omega)$, then we can extend  $g$ linearly to the set $\mathbb{Z}_{2}^n$, and we uniquely obtain an 
automorphism $e_g$ of this group, defined by the rule $e_g(\sum_{i=1}^{i}a_ie_i)=
\sum_{i=1}^{n}a_ig(e_i)$,   such that $e_g|_{\Omega}=g$. This  follows that $|A| \geq n!$. Now  the group $A$ is explicitly determined, that is,  $A=\{e_g | g\in Sym(\Omega)  \}$.  Moreover,   we have $A \cong Sym(\Omega)$. Also, we have
$Aut(\Gamma) \cong \mathbb{Z}_{2}^n \rtimes A \cong \mathbb{Z}_{2}^n \rtimes Sym([n])$. \

We now assume that $k>2$. In this case we have  $\Omega=\{ \pm e_i \  | \  1\leq i \leq n \}$, hence $|\Omega|=2n$.  If $f$ is an automorphism of the group  $\mathbb{Z}_{k}^n$, then $f$ is uniquely determined, whenever $f(e_i)$ is defined for each $i, 1\leq i \leq n$. Note that if $f\in A$, then $g=f|_{\Omega}$ is a permutation of the set $\Omega$ such that when $g(e_i)$ is defined, then $g(-e_i)$ is defined, since $g(-e_i)=-g(e_i)$. Let $B=\{g| g\in Sym(\Omega),  g(-e_i)=-g(e_i), \  $for each$\  i, 1\leq i \leq n \}$. Now, it is easy to see that \
$$|B|=(2n)(2n-2)(2n-4)...(2n-2(n-1))=2^n(n!)$$
Indeed,   for defining $f(e_1)$ we have $2n$ 
choices, then for defining $f(e_2)$ we have $2n-2$ choices,  then for defining $f(e_3)$ we have $2n-4$ choices, and..., and for defining $f(e_{2n})$ we have $2n-2(n-1)=2$ choices.
Note  that each element $v$ of the group $\mathbb{Z}_{k}^n$ can be represented uniquely in the form $x=\sum_{i=1}^{n}a_ie_i$, where $a_i \in \mathbb{Z}_k$. Hence,
if $g \in B$, then $g$ can be uniquely extended to an automorphism $e_g$ of the group $\mathbb{Z}_{k}^n$, by defining the rule $e_g(\sum_{i=1}^{i}a_ie_i)=
\sum_{i=1}^{n}a_ig(e_i)$. Now,  it is clear that $A=\{e_g | g\in B  \}$, and $|A|=|B|$. Also we have;
$$|Aut(\Gamma)|=|\mathbb{Z}_{k}^n||A|=(k^n)(2^n)(n!)$$
We now show that  the graph $\Gamma= Q_n ^k = Cay(\mathbb{Z}_{k}^n, \Omega)$
 is a symmetric graph. Let $G=Aut(\Gamma)$. Since $\Gamma$ is a vertex-transitive graph, it is sufficient to show that $G_0$ acts transitively on the set $N(0)$, where $G_0$ is the stabilizer subgroup of the vertex $v=0$ in the group $G$. Note that $N(0)=\Omega$ and $G_0=A=\{e_g | g\in B  \}$. Let $v,w \in \Omega$. We define $f(v)=w$,  $f(w)=v$, and $f(x)=x$, for every $x \in \Omega -\{ v,w \}$. Hence $f \in B$, and  $e_f$ is an automorphism of the graph $\Gamma$ such that $e_f \in G_0$ with $e_f(v)=w$.
\end{proof}
Theorem 3.7. gives us  some important results.  For 
example,  a well known result due to Watkins [16] states that if a connected graph $\Gamma$ is symmetric, then its vertex and edge connectivity is optimal, namely,  its valency. Hence, we re-obtain the following known fact.
 \begin{cor}  The connectivity of the the   $k$-ary   $n$-cube $Q_n^k$ is $n$ when $k=2$, and is $2n$ when $k>2$.
\end{cor}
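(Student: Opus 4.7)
The strategy is to combine Theorem 3.7 with the classical theorem of Watkins cited as [16] in the introduction, which asserts that any connected symmetric graph attains its maximum possible vertex-connectivity, equal to its valency. Once this principle is in place, the corollary reduces entirely to identifying the valency of $Q_n^k$ in each of the two cases.

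First I would read off the valency from the Cayley presentation $Q_n^k \cong \mathrm{Cay}(\mathbb{Z}_k^n;\Omega)$ with $\Omega=\{\pm e_i : 1\le i\le n\}$. When $k=2$ we have $e_i=-e_i$ in $\mathbb{Z}_2^n$, so $|\Omega|=n$ and the graph is $n$-regular; when $k>2$ the elements $e_1,-e_1,\ldots,e_n,-e_n$ are pairwise distinct, so $|\Omega|=2n$ and the graph is $2n$-regular. Connectedness is automatic, since $\{e_1,\ldots,e_n\}$ already generates $\mathbb{Z}_k^n$, hence so does $\Omega$.

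Next I would invoke symmetry: for $k\ne 4$, Theorem 3.7 already established that $Q_n^k$ is symmetric, so Watkins' theorem immediately yields that $\kappa(Q_n^k)$ equals the valency, giving exactly $n$ when $k=2$ and $2n$ when $k>2$, as claimed.

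The main obstacle I anticipate is the excluded case $k=4$, which falls outside the scope of Theorem 3.7 because $Q_n^4$ fails the $us$ property. To patch this I would argue directly that the subgroup of $\mathrm{Aut}(Q_n^4)$ generated by the coordinate permutations of $\mathbb{Z}_4^n$ together with the componentwise sign-flip automorphisms preserves $\Omega$ setwise and acts transitively on it; since $Q_n^4$ is already vertex-transitive as a Cayley graph, transitivity of the stabilizer of $0$ on $N(0)=\Omega$ promotes it to a symmetric graph (by the standard criterion recalled in Section 2), and Watkins' theorem again yields $\kappa(Q_n^4)=2n$. With this side remark, the corollary is proved in full generality.
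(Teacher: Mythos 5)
Your proposal follows essentially the same route as the paper: Theorem 3.7 establishes that $Q_n^k$ is symmetric (for $k\neq 4$), Watkins' theorem [16] then gives that the connectivity of a connected symmetric graph equals its valency, and the valency is read off from $|\Omega|$, namely $n$ for $k=2$ and $2n$ for $k>2$. The one genuine addition in your write-up is the separate treatment of $k=4$: the paper states the corollary for all $k>2$ even though Theorem 3.7 excludes $k=4$, so its stated justification does not literally cover that case, whereas your direct argument (coordinate permutations together with coordinatewise sign changes preserve $\Omega$ and make the stabilizer of $0$ transitive on $N(0)=\Omega$, hence $Q_n^4$ is symmetric and Watkins applies) correctly closes that gap.
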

 \section{Conclusion}
                                                                                                                                      In this paper, we introduced $us$-Caley graphs and investigated the automorphism groups of these
                                                                                                                                      graphs in Theorem 3.2. Then by Theorem 3.2. we determined the automorphism groups of some
                                                                                                                                      classes of graphs which are important in applied graph theory. In particular, we determined the
                                                                                                                                      automorphism groups of M\"{o}bius ladder $M_n$ (Theorem 3.5) and $k$-ary   $n$-cube  $ Q_n ^k$, $k \neq 4$ (Theorem 3.7).   Moreover, we re-showed that the graph $Q_n^k$ is a symmetric graph, and therefore its connectivity is optimal (Corollary 3.8).

\end{document}